\title{SYSTEM RELIABILITY AND WEIGHTED LATTICE POLYNOMIALS}
\author{Alexander Dukhovny\\
\small{Mathematics Department, San Francisco State University} \\
\small{San Francisco, CA 94132, USA} \\
\small{dukhovny[at]math.sfsu.edu} %
\and
Jean-Luc Marichal\\
\small{Institute of Mathematics, University of Luxembourg} \\
\small{162A, avenue de la Fa\"{\i}encerie, L-1511 Luxembourg, Luxembourg} \\
\small{jean-luc.marichal[at]uni.lu} %
}
\date{\small Revised, March 26, 2008}
\begin{document}
\maketitle

\theoremstyle{plain}
\newtheorem{theorem}{Theorem}%[section]
\newtheorem{lemma}[theorem]{Lemma}
\newtheorem{proposition}[theorem]{Proposition}
\newtheorem{corollary}[theorem]{Corollary}

\theoremstyle{definition}
\newtheorem{definition}[theorem]{Definition}
\newtheorem{example}[theorem]{Example}

\theoremstyle{remark}
\newtheorem*{conjecture}{\indent Conjecture}
\newtheorem*{remark}{\indent Remark}

\newcommand{\N}{\mathbb{N}}                     % pos. integers
\newcommand{\R}{\mathbb{R}}                     % reals
\newcommand{\Vspace}{\vspace{2ex}}                  % saut vertical
\newcommand{\Ind}{\mathrm{Ind}}

\begin{abstract}
The lifetime of a system of connected units under some natural assumptions can be represented as a random variable $Y$ defined as a weighted
lattice polynomial of random lifetimes of its components. As such, the concept of a random variable $Y$ defined by a weighted lattice polynomial
of (lattice-valued) random variables is considered in general and in some special cases. The central object of interest is the cumulative
distribution function of $Y$. In particular, numerous results are obtained for lattice polynomials and weighted lattice polynomials in case of
independent arguments and in general. For the general case, the technique consists in considering the joint probability generating function of
``indicator'' variables. A connection is studied between $Y$ and order statistics of the set of arguments.
\end{abstract}

\noindent{\bf Keywords:} lifetime models, reliability, weighted lattice polynomial, discrete Sugeno integral, probability generating function.

%---------------------------------------------------------------------------------------------- Section 1
\section{Introduction}

In a system consisting of several interconnected units its lifetime can often be expressed in terms of those of the components. Assuming that
the system and the units are of the crisply ``on/off'' kind, the lifetime of a ``serially connected'' segment of units (where all units are
needed for functioning) is the minimum of the individual lifetimes; for a circuit of parallel units (one unit is sufficient for work) the
lifetime is the maximum of the individual lifetimes. In a complex system where both connection types are involved the system's lifetime $Y$ can
be expressed as a lattice polynomial function (combination of minima and maxima) of the component's lifetimes.

In addition, there may be ``collective upper bounds'' on lifetimes of certain subsets of units imposed by external conditions (say, physical
properties of the assembly) or even ``collective lower bounds'' imposed for instance by back-up blocks with constant lifetimes. These upper and
lower bounds can be easily modelled by incorporating additional components with constant lifetimes. Indeed, consider a subset with random
lifetime $X$ and assume that a lower (resp.\ upper) bound $c\in\R$ is required on this lifetime, which then becomes $c\vee X$ (resp.\ $c\wedge
X$). This situation can be modelled by connecting in parallel (resp. series) each component of that subset with a component of constant lifetime
$c$.

Following the above description, the lifetime of such a system involves the lower and upper bounds as follows. Denoting by $[n]=\{1,\ldots,n\}$
the set of indices of the units and by $X_i$ the random lifetime of the $i$th unit ($i\in [n]$), we conclude that
$$
Y = p(X_1,\ldots,X_n)
$$
where $p$ is an $n$-ary weighted lattice polynomial,\footnote{In the rest of the paper, \emph{weighted lattice polynomial functions}\/ will be
called \emph{weighted lattice polynomials}.} whose definition is recalled next (see Marichal \cite{Marc}). Let $L$ denote an arbitrary bounded
distributive lattice with lattice operations $\wedge$ and $\vee$ and denote by $a$ and $b$ its bottom and top elements. For any $\alpha,\beta\in
L$ and any subset $S\subseteq [n]$, let $\vec e^{\, \alpha,\beta}_S$ denote the characteristic vector of $S$ in $\{\alpha,\beta\}^n$, that is,
the $n$-tuple whose $i$th component is $\beta$, if $i\in S$, and $\alpha$, otherwise. The special case $\vec e^{\, 0,1}_S$ will be denoted $\vec
\varepsilon_S$.

\begin{definition}
The class of \emph{weighted lattice polynomials}\/ from $L^n$ to $L$ is defined as follows:
\begin{enumerate}
\item[(1)] For any $k\in [n]$ and any $c\in L$, the projection $(x_1,\ldots,x_n)\mapsto x_k$ and the constant function $(x_1,\ldots,x_n)\mapsto
c$ are weighted lattice polynomials from $L^n$ to $L$.

\item[(2)] If $p$ and $q$ are weighted lattice polynomials from $L^n$ to $L$, then $p\wedge q$ and $p\vee q$ are weighted lattice polynomials
from $L^n$ to $L$.

\item[(3)] Every weighted lattice polynomial from $L^n$ to $L$ is constructed by finitely many applications of the rules (1) and (2).
\end{enumerate}
\end{definition}

It was proved \cite{Marc} that any weighted lattice polynomial $p:L^n\to L$ can be expressed in disjunctive normal form : there exists a set
function $w:2^{[n]}\to L$ such that
\begin{equation}\label{eq:WlpDisjF}
p(\vec x)=\bigvee_{S\subseteq [n]}\Big[w(S)\wedge\bigwedge_{i\in S}x_i\Big].
\end{equation}
Moreover, from among all the possible set functions $w:2^{[n]}\to L$ fulfilling (\ref{eq:WlpDisjF}), only one is nondecreasing. It is defined as
\begin{equation}\label{eq:WlpDisjFw}
w(S)=p(\vec e^{\, a,b}_S),\qquad S\subseteq [n].
\end{equation}

The $n$-ary weighted lattice polynomial defined by a given nondecreasing set function $w:2^{[n]}\to L$ will henceforth be denoted $p_w$.

\begin{remark}
Weighted lattice polynomials are strongly related to the concept of discrete \emph{Sugeno integral}\/ \cite{Sug74,Sug77}. Indeed, as observed in
\cite{Marc}, a function $f:L^n\to L$ is a discrete Sugeno integral if and only if it is a weighted lattice polynomial fulfilling $f(\vec e^{\,
a,b}_{\varnothing})=a$ and $f(\vec e^{\, a,b}_{[n]})=b$.
\end{remark}

Assuming that $L\subseteq\overline{\R}=[-\infty,\infty]$ and that the lifetime $X_i$ of every unit $i\in [n]$ is $L$-valued, the lifetime of the
system, with possible collective lower or upper bounds, is then given by
$$
Y = p_w(X_1,\ldots,X_n)
$$
where $p_w$ is a weighted lattice polynomial from $L^n$ to $L$.

In those terms, $\Pr[Y>y]$ is the probability that by the time $y$ the system is still on, that is, the \emph{system reliability}\/ at time $y$.
This probability is in turn determined by the pattern of units that are on at the time $y$, that is, by the indicators of events $X_i>y$.

In this paper we investigate the cumulative distribution function (c.d.f.) of $Y$ in case of general dependent arguments (see Section 2). Our
results generalize both the case of independent arguments, which was studied in Marichal \cite{Mar08}, and the special situation where no
constant lifetimes are considered (the weighted lattice polynomial is merely a lattice polynomial), which was investigated by Marichal
\cite{Mar06} for independent arguments and then by Dukhovny \cite{Duk07} for general dependent arguments.

We also investigate the special case where the weighted lattice polynomial is a symmetric function and the case where, at any time, the
reliability of any group depends only on the number of units in the group (see Section 3).

\section{Cumulative distribution function in the general case}

We assume throughout that $L\subseteq\overline{\R}$, which implies that $L$ is totally ordered. For an $L$-valued random variable $X$ one can
introduce a \emph{supplementary variable}
$$
\chi(x)=\Ind(X>x)\in\{0,1\}.
$$

For a set of $L$-valued random variables $X_i$, $i\in [n]$, we consider a vector of (synchronous) indicator variables
$$
\vec\chi(x)=(\chi_1(x),\ldots,\chi_n(x)),
$$
where
$$
\chi_i(x)=\Ind(X_i>x),\qquad i\in [n].
$$

For any $\vec\kappa\in\{0,1\}^n$, let $S_{\vec\kappa}$ denote the subset of $[n]$ whose characteristic vector in $\{0,1\}^n$ is $\vec\kappa$
(that is such that $\vec\varepsilon_{S_{\vec\kappa}}=\vec\kappa$).

We have the following fundamental lemma:

\begin{lemma}\label{lemma:WLPindVar}
Let $p_w:L^n\to L$ be a weighted lattice polynomial and $Y=p_w(X_1,\ldots,X_n)$. Then
$$
\Ind(Y>y)=\Ind(w(S_{\vec\chi(y)})>y),\qquad y\in L.
$$
\end{lemma}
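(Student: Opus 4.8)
The plan is to start from the disjunctive normal form (\ref{eq:WlpDisjF}) and evaluate the polynomial $p_w$ at the random argument $\vec X=(X_1,\ldots,X_n)$, then compare with the threshold $y$. Writing
\[
Y=p_w(\vec X)=\bigvee_{S\subseteq[n]}\Big[w(S)\wedge\bigwedge_{i\in S}X_i\Big],
\]
I would observe that since $L$ is totally ordered, the event $\{Y>y\}$ is simply the event that at least one of the terms $w(S)\wedge\bigwedge_{i\in S}X_i$ exceeds $y$. In a totally ordered lattice, $a\wedge b>y$ holds if and only if $a>y$ and $b>y$; so the $S$-th term exceeds $y$ exactly when $w(S)>y$ and $X_i>y$ for every $i\in S$, i.e.\ when $w(S)>y$ and $S\subseteq S_{\vec\chi(y)}$ (recalling that $\chi_i(y)=\Ind(X_i>y)$ records precisely which components satisfy $X_i>y$).

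Next I would combine these term-by-term conditions. The event $\{Y>y\}$ is the union over $S$ of the events just described, so $Y>y$ holds if and only if there exists some $S\subseteq S_{\vec\chi(y)}$ with $w(S)>y$. Because $w$ is nondecreasing, among all subsets $S\subseteq S_{\vec\chi(y)}$ the largest value of $w$ is attained at $S=S_{\vec\chi(y)}$ itself. Hence the existence of such an $S$ is equivalent to the single condition $w(S_{\vec\chi(y)})>y$. Translating this equivalence of events into indicator functions yields exactly
\[
\Ind(Y>y)=\Ind\big(w(S_{\vec\chi(y)})>y\big),
\]
which is the claimed identity.

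The step I expect to require the most care is the passage from ``$\bigvee_S$ of the terms exceeds $y$'' to ``the term at $S=S_{\vec\chi(y)}$ exceeds $y$''; this is where the monotonicity of $w$ and the total order on $L$ both get used, and it is worth stating explicitly that the supremum in (\ref{eq:WlpDisjF}) over $S\subseteq[n]$ may be restricted to $S\subseteq S_{\vec\chi(y)}$ without loss, since any term with $S\not\subseteq S_{\vec\chi(y)}$ contains a factor $X_i\le y$ for some $i\in S$ and therefore cannot itself exceed $y$. The remaining ingredients --- distributivity of $>y$ over $\wedge$ and $\vee$ in a chain, and the fact that evaluating $p_w$ on the random vector commutes with these pointwise (sample-path) computations --- are routine, so the whole argument reduces to a clean case analysis on whether $w(S_{\vec\chi(y)})>y$.
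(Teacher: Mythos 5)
Your proof is correct, and it shares the paper's starting point---threshold the disjunctive normal form (\ref{eq:WlpDisjF}) and use the fact that, $L$ being totally ordered, the relation $>y$ distributes over $\vee$ and $\wedge$, so that $\{Y>y\}$ occurs exactly when $w(S)>y$ and $S\subseteq S_{\vec\chi(y)}$ for some $S$---but it finishes by a genuinely different step. The paper packages the thresholded expression as a weighted lattice polynomial $\pi_{\omega_y}:\{0,1\}^n\to\{0,1\}$ with nondecreasing set function $\omega_y(S)=\Ind(w(S)>y)$, and then cites the representation formula (\ref{eq:WlpDisjFw}): evaluating $\pi_{\omega_y}$ at the characteristic vector $\vec\varepsilon_S$ returns $\omega_y(S)$. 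You instead conclude with an elementary monotonicity argument: since $w$ is nondecreasing (which is exactly what the notation $p_w$ guarantees, and is genuinely needed---the identity can fail for a non-monotone representing set function), the existence of some $S\subseteq S_{\vec\chi(y)}$ with $w(S)>y$ is equivalent to the single condition $w(S_{\vec\chi(y)})>y$. Your route is more self-contained, as it avoids any appeal to the structure theory of weighted lattice polynomials over the two-element lattice; what the paper's route buys is the object $\pi_{\omega_y}$ itself, which is reused later (the remark that $\chi_Y(y)=\pi_{\omega_y}(\vec\chi(y))$ is the binary counterpart of $Y=p_w(X_1,\ldots,X_n)$, and the polynomial $\pi_\omega$ appearing in the lattice-polynomial case of Section 3.1, both fall out of the paper's proof for free), whereas under your argument these byproducts would have to be established separately.
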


\begin{proof}
The proof mainly lies on the distributive property of $\Ind(\cdot)$ with respect to disjunction and conjunction: for any events $A$ and $B$,
\begin{eqnarray*}
\Ind(A\vee B) &=& \Ind(A)\vee\Ind(B),\\
\Ind(A\wedge B) &=& \Ind(A)\wedge\Ind(B).
\end{eqnarray*}
By (\ref{eq:WlpDisjF}), we obtain
\begin{eqnarray*}
\Ind(Y>y) &=& \Ind(p_w(X_1,\ldots,X_n)>y) \\
&=& \bigvee_{S\subseteq [n]}\Big(\Ind(w(S)>y)\wedge\bigwedge_{i\in S}\Ind(X_i>y)\Big) \\
&=& \pi_{\omega_y}(\vec\chi(y)),
\end{eqnarray*}
where $\pi_{\omega_y}:\{0,1\}^n \to\{0,1\}$ is the weighted lattice polynomial defined by the nondecreasing set function
$\omega_y:2^{[n]}\to\{0,1\}$ fulfilling
$$
\omega_y(S)=\Ind(w(S)>y),\qquad S\subseteq [n].
$$

Finally, since $\vec\chi(y)\in\{0,1\}^n$, by (\ref{eq:WlpDisjFw}) we have
$$
\pi_{\omega_y}(\vec\chi(y)) = \omega_y(S_{\vec\chi(y)}) = \Ind(w(S_{\vec\chi(y)})>y).
$$
\end{proof}

\begin{remark}
Interestingly enough, we have observed in the proof of Lemma~\ref{lemma:WLPindVar} that the identity $Y=p_w(X_1,\ldots,X_n)$ has its binary
counterpart in terms of indicator variables, namely
$$
\chi_Y(y)=\pi_{\omega_y}(\chi_1(y),\ldots,\chi_n(y)),\qquad y\in L,
$$
where $\chi_Y(y)=\Ind(Y>y)$.
\end{remark}

We now state our main result.

\begin{theorem}\label{thm:OmFes}
Let $Y=p_w(X_1,\ldots,X_n)$ and denote its c.d.f.\ by $F_Y(y)$. Then, for any $y\in L$,
\begin{equation}\label{eq:OmFeS}
1-F_Y(y)=\sum_{\vec\kappa\in\{0,1\}^n} \Ind(w(S_{\vec\kappa})>y)\,\Pr(\vec\chi(y)=\vec\kappa),
\end{equation}
or equivalently,
\begin{equation}\label{eq:OmFeS2}
1-F_Y(y)=\sum_{S\subseteq [n]} \Ind(w(S)>y)\,\Pr(\vec\chi(y)=\vec\varepsilon_S).
\end{equation}
\end{theorem}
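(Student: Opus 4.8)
The plan is to compute $1-F_Y(y)=\Pr(Y>y)$ by taking expectations of the indicator identity established in Lemma~\ref{lemma:WLPindVar}. Since $1-F_Y(y)=\Pr(Y>y)=\mathbb{E}[\Ind(Y>y)]$, and Lemma~\ref{lemma:WLPindVar} gives us $\Ind(Y>y)=\Ind(w(S_{\vec\chi(y)})>y)$, the entire proof reduces to evaluating the expectation of the right-hand side.

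First I would write $\mathbb{E}[\Ind(w(S_{\vec\chi(y)})>y)]$ by conditioning on the random vector $\vec\chi(y)$, which takes values in the finite set $\{0,1\}^n$. Using the law of total expectation (equivalently, partitioning the sample space according to the value of $\vec\chi(y)$), we get
\begin{equation*}
\mathbb{E}[\Ind(w(S_{\vec\chi(y)})>y)]=\sum_{\vec\kappa\in\{0,1\}^n}\Ind(w(S_{\vec\kappa})>y)\,\Pr(\vec\chi(y)=\vec\kappa).
\end{equation*}
The key observation that makes this step clean is that on the event $\{\vec\chi(y)=\vec\kappa\}$, the quantity $w(S_{\vec\chi(y)})$ equals the deterministic value $w(S_{\vec\kappa})$, so $\Ind(w(S_{\vec\chi(y)})>y)$ is constant on that event and can be pulled out of the conditional expectation. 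This immediately yields formula (\ref{eq:OmFeS}).

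The equivalent form (\ref{eq:OmFeS2}) then follows by a pure reindexing of the sum. By definition, $S_{\vec\kappa}$ is the subset of $[n]$ whose characteristic vector $\vec\varepsilon_{S_{\vec\kappa}}$ equals $\vec\kappa$, so the map $\vec\kappa\mapsto S_{\vec\kappa}$ is a bijection between $\{0,1\}^n$ and $2^{[n]}$. Substituting $S=S_{\vec\kappa}$ (equivalently $\vec\kappa=\vec\varepsilon_S$) transforms the sum over $\vec\kappa\in\{0,1\}^n$ into a sum over $S\subseteq[n]$ and replaces $\Pr(\vec\chi(y)=\vec\kappa)$ by $\Pr(\vec\chi(y)=\vec\varepsilon_S)$, giving (\ref{eq:OmFeS2}).

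I do not anticipate a genuine obstacle here, since Lemma~\ref{lemma:WLPindVar} has already done the substantive work of reducing the lattice-polynomial structure to a single indicator. The only point requiring care is the conditioning step: one must justify that the finitely many events $\{\vec\chi(y)=\vec\kappa\}$ partition the probability space and that the integrand is genuinely constant on each block, which is exactly the content of the Lemma. No independence of the $X_i$ is needed, which is precisely why the result holds in the general dependent case.
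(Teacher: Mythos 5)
Your proposal is correct and is essentially the paper's own proof: both take the expectation of the identity from Lemma~\ref{lemma:WLPindVar}, condition on the value of $\vec\chi(y)$, and use the fact that $\Ind(w(S_{\vec\kappa})>y)$ is deterministic on each event $\{\vec\chi(y)=\vec\kappa\}$. Your explicit justification of the reindexing $\vec\kappa\leftrightarrow S_{\vec\kappa}$ for (\ref{eq:OmFeS2}) is a minor addition the paper leaves implicit.
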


\begin{proof}
By Lemma~\ref{lemma:WLPindVar}, we have
\begin{eqnarray*}
1-F_Y(y) &=& \mathrm{E}(\Ind(Y>y)) = \mathrm{E}(\Ind(w(S_{\vec\chi(y)})>y))\\
&=& \sum_{\vec\kappa\in\{0,1\}^n} \mathrm{E}(\Ind(w(S_{\vec\kappa})>y))\,\Pr(\vec\chi(y)=\vec\kappa)\\
&=& \sum_{\vec\kappa\in\{0,1\}^n} \Ind(w(S_{\vec\kappa})>y)\,\Pr(\vec\chi(y)=\vec\kappa).
\end{eqnarray*}
\end{proof}

Also, since
$$
\Ind(w(S_{\vec\kappa})>y)=1-\Ind(w(S_{\vec\kappa})\leqslant y),
$$
and $\sum_{\vec\kappa} \Pr(\vec\chi(y)=\vec\kappa)=1$, one can present (\ref{eq:OmFeS}) as
$$
F_Y(y)=\sum_{\vec\kappa\in\{0,1\}^n} \Ind(w(S_{\vec\kappa})\leqslant y)\,\Pr(\vec\chi(y)=\vec\kappa).
$$

The formulas of Theorem~\ref{thm:OmFes} are quite remarkable. They show that the vector of binary status indicators $\vec\chi(y)$ at the time
$y$ is the only thing needed to determine the status of the whole system. Moreover, the ``dot product'' nature of these formulas has, as it will
be seen, a special significance.

\begin{remark}
It is noteworthy that an alternative form of formula (\ref{eq:OmFeS2}), namely
$$
1-F_Y(y)=\sum_{S\subseteq [n]} \Ind(w(S)>y)\,\Pr\Big(\bigvee_{i\in [n]\setminus S} X_i\leqslant y<\bigwedge_{i\in S}X_i\Big),
$$
was already found through a direct use of the disjunctive normal form (\ref{eq:WlpDisjF}) of $p_w$ (see proof of Theorem 1 in Marichal
\cite{Mar08}).
\end{remark}

In the general case when the random arguments are dependent the probability parts of the formulas (\ref{eq:OmFeS})--(\ref{eq:OmFeS2}) can be
treated using the technique of probability generating functions (p.g.f.). For a random variable $X$ with a c.d.f.\ $F(x)$ we introduce the
p.g.f.\ of its indicator $\chi(x)$:
\begin{eqnarray*}
G(z,x) &=& \mathrm{E}(z^{\chi(x)}) ~=~ \Pr[\chi(x)=0]+z\Pr[\chi(x)=1]\\
&=& F(x)+z[1-F(x)],\qquad |z|\leqslant 1.
\end{eqnarray*}

For a set of (generally dependent) random variables $X_1,\ldots,X_n$, we denote the joint p.g.f.\ of their indicator variables $\vec\chi(x)$ by
$$
G(\vec z,x)=\mathrm{E}(\vec z^{\,\vec\chi(x)})=\mathrm{E}\Big(\prod_{i=1}^n z_i^{\chi_i(x)}\Big),\qquad |z_i|\leqslant 1,\, i\in [n].
$$

By definition,
\begin{equation}\label{eq:GMultPol}
G(\vec z,x)=\sum_{\vec\kappa\in\{0,1\}^n}\Pr(\vec\chi(x)=\vec\kappa)\,\vec z^{\,\vec\kappa}=\sum_{S\subseteq
[n]}\Pr(\vec\chi(x)=\vec\varepsilon_S)\,\prod_{i\in S}z_i.
\end{equation}
Since $G(\vec z,x)$ is a multilinear polynomial in $z_1,\ldots,z_n$, we immediately obtain~\footnote{Note that, since $G(\vec z,x)$ is a
multilinear polynomial, its derivatives can be also obtained by using discrete derivatives (see Grabisch et al.~\cite[\S 5]{GraMarRou00}).}
$$
\Pr(\vec\chi(x)=\vec\kappa)=\frac{\partial^{(\vec\kappa)}}{\partial\vec z}\, G(\vec z,x)\Big|_{\vec z=\vec 0}
$$
which, with (\ref{eq:OmFeS}), enables to compute $F_Y(y)$ whenever $G(\vec z,y)$ can be provided.

As a corollary, we immediately retrieve the known expression of $F_Y(y)$ in the special case when $X_1,\ldots,X_n$ are independent (see Marichal
\cite{Mar08}).

\begin{corollary}\label{cor:IndVar}
When $X_1,\ldots,X_n$ are independent random variables with c.d.f.'s $F_i(x)$, $i\in [n]$, we have
$$
1-F_Y(y)=\sum_{\vec\kappa\in\{0,1\}^n} \Ind(w(S_{\vec\kappa})>y)\,\prod_{i=1}^n F_i(y)^{1-\kappa_i}[1-F_i(y)]^{\kappa_i}.
$$
\end{corollary}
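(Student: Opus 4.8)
The plan is to specialize Theorem~\ref{thm:OmFes}, in its form (\ref{eq:OmFeS}), to the independent case. The only quantity in (\ref{eq:OmFeS}) that reflects the joint law of the arguments is $\Pr(\vec\chi(y)=\vec\kappa)$; everything else, namely the coefficient $\Ind(w(S_{\vec\kappa})>y)$, is a deterministic function of $w$ and $y$ and is unaffected by dependence. Hence the entire task reduces to computing $\Pr(\vec\chi(y)=\vec\kappa)$ under independence and substituting. I would state this reduction explicitly so that the only thing left is a factorization argument.

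First I would recall that each indicator is a Bernoulli variable with $\Pr[\chi_i(y)=1]=\Pr[X_i>y]=1-F_i(y)$ and $\Pr[\chi_i(y)=0]=F_i(y)$. Because $X_1,\ldots,X_n$ are independent, the indicators $\chi_1(y),\ldots,\chi_n(y)$ are independent as well (each $\chi_i(y)$ is a measurable function of $X_i$ alone), so the joint event factorizes:
$$
\Pr(\vec\chi(y)=\vec\kappa)=\prod_{i=1}^n \Pr(\chi_i(y)=\kappa_i).
$$
Then I would observe that the single-coordinate factor can be written uniformly in $\kappa_i\in\{0,1\}$ as
$$
\Pr(\chi_i(y)=\kappa_i)=F_i(y)^{1-\kappa_i}\,[1-F_i(y)]^{\kappa_i},
$$
since this expression correctly yields $F_i(y)$ when $\kappa_i=0$ and $1-F_i(y)$ when $\kappa_i=1$. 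Taking the product over $i\in[n]$ gives precisely the claimed factor.

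Substituting this product into (\ref{eq:OmFeS}) yields the stated formula, which completes the proof. Alternatively, and perhaps more in the spirit of the surrounding discussion on probability generating functions, I could route the computation through $G(\vec z,y)$: under independence the joint p.g.f.\ factorizes as $G(\vec z,y)=\prod_{i=1}^n\bigl(F_i(y)+z_i[1-F_i(y)]\bigr)$, and reading off the coefficient of $\vec z^{\,\vec\kappa}$ (equivalently applying the multilinear derivative formula at $\vec z=\vec 0$) recovers the same product of Bernoulli probabilities. Either route is short; the substance is entirely in the factorization of the joint law of the indicators.

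I do not expect a genuine obstacle here, since the result is explicitly flagged as a corollary that ``immediately retrieves'' a known expression. The one point deserving a line of care is the independence of the indicators: it must be justified from the independence of the $X_i$ rather than assumed, but this is immediate because $\chi_i(y)$ depends only on $X_i$. A second minor point is presentational—writing the two complementary Bernoulli probabilities as a single exponentiated expression in $\kappa_i$—which is the device that lets the per-coordinate factors collapse into the compact product appearing in the statement.
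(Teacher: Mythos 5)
Your proposal is correct; the only difference from the paper is the vehicle for the factorization. The paper's proof runs entirely through the p.g.f.\ machinery it has just set up: by independence, $G(\vec z,y)=\prod_{i=1}^n\bigl[F_i(y)+z_i(1-F_i(y))\bigr]$, and then the multilinear-derivative formula $\Pr(\vec\chi(y)=\vec\kappa)=\frac{\partial^{(\vec\kappa)}}{\partial\vec z}\,G(\vec z,y)\big|_{\vec z=\vec 0}$ yields exactly the product $\prod_{i=1}^n F_i(y)^{1-\kappa_i}[1-F_i(y)]^{\kappa_i}$, which is then fed into Theorem~\ref{thm:OmFes}. Your primary route --- observing that independence of the $X_i$ implies independence of the synchronous indicators $\chi_i(y)$ (each being a function of $X_i$ alone), so that $\Pr(\vec\chi(y)=\vec\kappa)$ factorizes directly into Bernoulli probabilities --- is the same mathematical content stripped of the generating-function packaging, and is arguably more elementary and self-contained; what the paper's presentation buys is consistency with its general technique for dependent arguments, where the p.g.f.\ (and its derivatives) is the central computational object, so the independent case appears as a one-line specialization of that apparatus. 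Your ``alternative'' route via $G(\vec z,y)$ is precisely the paper's proof, so nothing is missing; your point that the independence of the indicators must be justified rather than assumed is well taken, and is implicitly the same fact that licenses the paper's product formula for $G$.
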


\begin{proof}
By independence, we immediately have
$$
G(\vec z,y) = \prod_{i=1}^n [F_i(y)+z_i(1-F_i(y))],
$$
and hence
$$
\frac{\partial^{(\vec\kappa)}}{\partial\vec z}\, G(\vec z,y)\Big|_{\vec z=\vec 0} = \prod_{i=1}^n F_i(y)^{1-\kappa_i}[1-F_i(y)]^{\kappa_i}.
$$
\end{proof}

\begin{remark}
The assumptions of Corollary~\ref{cor:IndVar} can be slightly relaxed while keeping a product form for $\Pr(\vec\chi(x)=\vec\kappa)$. In fact,
assuming that the arguments are dependent but their synchronous indicators $\chi_1(x),\ldots,\chi_n(x)$ are independent for any fixed $x$, we
get
$$
\Pr(\vec\chi(x)=\vec\kappa)=\prod_{i=1}^n \Pr[\chi_i(x)=0]^{1-\kappa_i}\Pr[\chi_i(x)=1]^{\kappa_i}.
$$
Similarly, in the case when all $X_i=Z+Y_i$, $Z$ and all $Y_i$ being independent (random shift), the probability density function formula is the
one for the independent arguments case convoluted with the density function of $Z$.
\end{remark}

An alternative approach for computing $\Pr(\vec\chi(x)=\vec\varepsilon_S)$ is given in the following proposition, where $F(\vec x)$ denotes the
joint c.d.f.\ of $X_1,\ldots,X_n$:

\begin{proposition}\label{prop:GMobTrans}
For any fixed $x\in L$, the set function $S\mapsto\Pr(\vec\chi(x)=\vec\varepsilon_S)$ is the M\"obius transform of the set function $S\mapsto
G(\vec\varepsilon_S,x)$ and we have
$$
G(\vec\varepsilon_S,x)=F(\vec e^{\, x,b}_S), \qquad S\subseteq [n].
$$
\end{proposition}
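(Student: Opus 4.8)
The plan is to establish two separate claims: first the closed-form evaluation $G(\vec\varepsilon_S,x)=F(\vec e^{\,x,b}_S)$, and then the assertion that $S\mapsto\Pr(\vec\chi(x)=\vec\varepsilon_S)$ is the M\"obius transform of $S\mapsto G(\vec\varepsilon_S,x)$. For the first claim, I would start from the definition $G(\vec z,x)=\mathrm{E}(\prod_{i=1}^n z_i^{\chi_i(x)})$ and substitute $\vec z=\vec\varepsilon_S$, so that $z_i=1$ for $i\in S$ and $z_i=0$ for $i\notin S$. Since $0^0=1$ and $0^1=0$ by convention, each factor with $i\notin S$ contributes the indicator $\Ind(\chi_i(x)=0)=\Ind(X_i\leqslant x)$, while each factor with $i\in S$ equals $1$ regardless of $\chi_i(x)$. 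Taking expectations then gives $G(\vec\varepsilon_S,x)=\mathrm{E}(\prod_{i\notin S}\Ind(X_i\leqslant x))=\Pr(X_i\leqslant x \text{ for all } i\notin S)$. This last probability is exactly the value of the joint c.d.f.\ at the point whose $i$th coordinate is $x$ for $i\notin S$ and $+\infty$ (i.e.\ the top element $b$) for $i\in S$, which is precisely $F(\vec e^{\,x,b}_S)$.

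For the M\"obius-transform claim, I would invoke formula (\ref{eq:GMultPol}), which already expresses $G(\vec z,x)$ as the multilinear polynomial $\sum_{S\subseteq[n]}\Pr(\vec\chi(x)=\vec\varepsilon_S)\prod_{i\in S}z_i$. Evaluating this at $\vec z=\vec\varepsilon_T$ forces $\prod_{i\in S}z_i$ to equal $1$ exactly when $S\subseteq T$ and $0$ otherwise, so that
$$
G(\vec\varepsilon_T,x)=\sum_{S\subseteq T}\Pr(\vec\chi(x)=\vec\varepsilon_S).
$$
This is the defining zeta-transform relation: $G(\vec\varepsilon_{\cdot},x)$ is the cumulative sum over subsets of $\Pr(\vec\chi(x)=\vec\varepsilon_{\cdot})$. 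By the standard M\"obius inversion on the Boolean lattice $2^{[n]}$, the inverse relation
$$
\Pr(\vec\chi(x)=\vec\varepsilon_T)=\sum_{S\subseteq T}(-1)^{|T|-|S|}\,G(\vec\varepsilon_S,x)
$$
holds, which is exactly the statement that $S\mapsto\Pr(\vec\chi(x)=\vec\varepsilon_S)$ is the M\"obius transform of $S\mapsto G(\vec\varepsilon_S,x)$.

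I do not anticipate a serious obstacle here, since both halves reduce to routine substitutions once the right objects are identified. The one point requiring mild care is the handling of the $0^0=1$ convention in the factors with $i\notin S$ when evaluating $\vec z^{\,\vec\chi(x)}$ at $\vec z=\vec\varepsilon_S$; I would make the convention explicit so that the indicator interpretation $\prod_{i\notin S}\Ind(X_i\leqslant x)$ is unambiguous. The identification of $\Pr(X_i\leqslant x,\ i\notin S)$ with $F(\vec e^{\,x,b}_S)$ relies only on the standard fact that setting a coordinate of a joint c.d.f.\ to the top element $b=+\infty$ marginalizes out that variable, which is immediate in $L\subseteq\overline{\R}$. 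The M\"obius-inversion step is purely combinatorial and follows directly from the multilinear (hence zeta-transform) structure already recorded in (\ref{eq:GMultPol}), so no new machinery is needed.
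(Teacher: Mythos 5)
Your proposal is correct, and it overlaps with the paper's proof in its main ingredient: both arguments obtain the M\"obius-transform claim by evaluating the multilinear form (\ref{eq:GMultPol}) at a characteristic vector, which yields the zeta relation $G(\vec\varepsilon_S,x)=\sum_{T\subseteq S}\Pr(\vec\chi(x)=\vec\varepsilon_T)$ (your extra step of writing out the inversion formula explicitly is harmless but not needed, since the zeta relation \emph{is} the definition of the M\"obius transform relationship). Where you genuinely diverge is in the identity $G(\vec\varepsilon_S,x)=F(\vec e^{\,x,b}_S)$: the paper derives it \emph{from} the zeta relation, writing each term $\Pr(\vec\chi(x)=\vec\varepsilon_T)$, $T\subseteq S$, as the probability of the event $\{X_i\leqslant x~\forall i\in[n]\setminus T \text{ and } X_i>x~\forall i\in T\}$ and observing that these disjoint events partition $\{X_i\leqslant x~\forall i\in[n]\setminus S\}$, so the sum collapses; you instead evaluate the defining expectation $\mathrm{E}(\vec z^{\,\vec\chi(x)})$ directly at $\vec z=\vec\varepsilon_S$ via the convention $0^0=1$. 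Your route makes the two halves of the proposition logically independent, which is a small structural advantage; the paper's route is slightly more economical and sidesteps the exponent convention entirely, since it only ever manipulates the polynomial (\ref{eq:GMultPol}) and probabilities of disjoint events. One small precision worth making in your write-up: $b$ is the top element of $L$ and need not be $+\infty$; the marginalization step is justified not by ``$b=+\infty$'' but by the fact that the $X_i$ are $L$-valued, so $\Pr(X_i\leqslant b)=1$.
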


\begin{proof}
Let $x\in L$. By (\ref{eq:GMultPol}), we have
$$
G(\vec\varepsilon_S,x) = \sum_{T\subseteq S}\Pr(\vec\chi(x)=\vec\varepsilon_T),
$$
which establishes the first part of the result. From this latter identity, we obtain
\begin{eqnarray*}
G(\vec\varepsilon_S,x) &=& \sum_{T\subseteq S}\Pr(X_i\leqslant x~\forall i\in [n]\setminus T ~\mbox{and} ~ X_i>x~\forall i\in T)\\
&=& \Pr(X_i\leqslant x~\forall i\in [n]\setminus S),
\end{eqnarray*}
which proves the second part.
\end{proof}

The result in Proposition~\ref{prop:GMobTrans} yields an explicit form of $G(\vec z,x)$ in terms of $F(\vec e^{\, x,b}_S)$, namely
$$
G(\vec z,x) = \sum_{S\subseteq [n]} F(\vec e^{\, x,b}_S)\, \prod_{i\in S}z_i\,\prod_{i\in [n]\setminus S}(1-z_i).
$$
Moreover, combining Proposition~\ref{prop:GMobTrans} and M\"obius inversion formula, it follows immediately that
\begin{eqnarray}
\Pr(\vec\chi(x)=\vec\varepsilon_S) &=& \sum_{T\subseteq S} (-1)^{|S|-|T|}\, G(\vec\varepsilon_T,x)\nonumber\\
&=& \sum_{T\subseteq S} (-1)^{|S|-|T|}\, F(\vec e^{\, x,b}_T). \label{eq:PrFeT}
\end{eqnarray}

\begin{remark}
We have observed that, since $G(\vec z,y)$ is a multilinear polynomial, it is completely determined by the $2^n$ values
$G(\vec\varepsilon_S,y)=F(\vec e^{\, y,b}_S)$, $S\subseteq [n]$. Thus, the computation of $F_Y(y)$ does not require the complete knowledge of
the joint c.d.f.\ $F(\vec x)$, but only of the values $F(\vec e^{\, y,b}_S)$ for all $y\in L$ and all $S\subseteq [n]$.
\end{remark}

\section{Selected special cases}

In this final section we consider three special cases that have a natural interpretation in terms of eligible applications:
\begin{enumerate}
\item The weighted lattice polynomial is a lattice polynomial (there are no lower or upper bounds).

\item The weighted lattice polynomial is a symmetric function (the system lifetime is a symmetric function of the component lifetimes).

\item The synchronous indicator variables are exchangeable (at any time, the reliability of any group depends only on the number of units in the
group).
\end{enumerate}

For any binary vector $\vec\kappa\in\{0,1\}^n$, we set $|\vec\kappa|=\sum_{i=1}^n\kappa_i$.

\subsection{Lattice polynomials}

An important special case arises when the lifetime of a system is obtained as a lattice polynomial of units' lifetimes. Recall that lattice
polynomials are defined as follows (see \cite{Marc}):

\begin{definition}
The class of \emph{lattice polynomials}\/ from $L^n$ to $L$ is defined as follows:
\begin{enumerate}
\item[(1)] For any $k\in [n]$, the projection $(x_1,\ldots,x_n)\mapsto x_k$ is a lattice polynomial from $L^n$ to $L$.

\item[(2)] If $p$ and $q$ are lattice polynomials from $L^n$ to $L$, then $p\wedge q$ and $p\vee q$ are lattice polynomials from $L^n$ to $L$.

\item[(3)] Every lattice polynomial from $L^n$ to $L$ is constructed by finitely many applications of the rules (1) and (2).
\end{enumerate}
\end{definition}

Thus, a lattice polynomial from $L^n$ to $L$ is nothing else than a weighted lattice polynomial $p_w:L^n\to L$ with a set function
$w:2^{[n]}\to\{a,b\}$ fulfilling $w(\varnothing)=a$ and $w([n])=b$ (see \cite{Marc}). In this case, for any $y\in L$, we obtain
\begin{eqnarray}
\Ind(w(S)>y) &=& \Ind(p_w(\vec e^{\, a,b}_S)>y) ~=~ \Ind(p_w(\vec e^{\, a,b}_S)=b)\,\Ind(b>y)\nonumber\\
&=& \pi_{\omega}(\vec\varepsilon_S) \, \Ind(b>y),\label{eq:IndPi}
\end{eqnarray}
where $\pi_{\omega}:\{0,1\}^n \to\{0,1\}$ is the (weighted) lattice polynomial defined by the nondecreasing set function
$\omega:2^{[n]}\to\{0,1\}$ fulfilling
$$
\omega(S)=\Ind(p_w(\vec e^{\, a,b}_S)=b),\qquad S\subseteq [n].
$$

\begin{remark}
Clearly, $\pi_{\omega}$ is exactly the weighted lattice polynomial $\pi_{\omega_y}$ introduced in the proof of Lemma~\ref{lemma:WLPindVar}
computed for the case of lattice polynomials. Moreover, it has the same disjunctive normal form as $p_w$.
\end{remark}

Since $\Pr(\vec\chi(b)=\vec\kappa)=0$ except when $\vec\kappa=\vec 0$, we obtain the following corollary (see Dukhovny \cite{Duk07}):

\begin{corollary}\label{cor:LP}
Under the assumptions of Theorem~\ref{thm:OmFes} and assuming further that $p_w$ is a lattice polynomial, we have
\begin{equation}\label{eq:corLP}
1-F_Y(y)=\sum_{\vec\kappa\in\{0,1\}^n} \pi_{\omega}(\vec\kappa)\,\Pr(\vec\chi(y)=\vec\kappa)
\end{equation}
in case of generally dependent argument and, in case of independence,
$$
1-F_Y(y)=\sum_{\vec\kappa\in\{0,1\}^n} \pi_{\omega}(\vec\kappa)\,\prod_{i=1}^n F_i(y)^{1-\kappa_i}[1-F_i(y)]^{\kappa_i}.
$$
\end{corollary}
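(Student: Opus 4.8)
The plan is to read the statement directly off Theorem~\ref{thm:OmFes} by feeding in the identity~(\ref{eq:IndPi}), which encodes exactly the extra structure that a lattice polynomial enjoys. First I would start from the general dependent-case formula~(\ref{eq:OmFeS}),
$$
1-F_Y(y)=\sum_{\vec\kappa\in\{0,1\}^n}\Ind(w(S_{\vec\kappa})>y)\,\Pr(\vec\chi(y)=\vec\kappa),
$$
and substitute the factorization $\Ind(w(S)>y)=\pi_{\omega}(\vec\varepsilon_S)\,\Ind(b>y)$ provided by~(\ref{eq:IndPi}). Since $\vec\varepsilon_{S_{\vec\kappa}}=\vec\kappa$ by the very definition of $S_{\vec\kappa}$, each summand becomes $\pi_{\omega}(\vec\kappa)\,\Ind(b>y)\,\Pr(\vec\chi(y)=\vec\kappa)$, so the whole sum equals $\Ind(b>y)$ times the right-hand side of~(\ref{eq:corLP}).

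The only genuine work is then to dispose of the scalar factor $\Ind(b>y)$, and this is the step I expect to be the main (indeed the sole) obstacle. For every $y$ strictly below the top element $b$ we have $\Ind(b>y)=1$ and~(\ref{eq:corLP}) drops out immediately; the entire issue is thus concentrated at the single point $y=b$, where $\Ind(b>y)=0$. There I would argue term by term that the factor is harmless, i.e.\ that $\pi_{\omega}(\vec\kappa)\,\Ind(b>b)\,\Pr(\vec\chi(b)=\vec\kappa)$ and $\pi_{\omega}(\vec\kappa)\,\Pr(\vec\chi(b)=\vec\kappa)$ coincide for each $\vec\kappa$ because one of the surviving factors already vanishes. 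Indeed, if $\vec\kappa\neq\vec 0$ then $\Pr(\vec\chi(b)=\vec\kappa)=0$, since $\chi_i(b)=\Ind(X_i>b)=0$ almost surely ($X_i$ is $L$-valued and $b$ is the top element of $L$); and if $\vec\kappa=\vec 0$ then $\pi_{\omega}(\vec 0)=\omega(\varnothing)=\Ind(p_w(\vec e^{\, a,b}_{\varnothing})=b)=\Ind(a=b)=0$, using that a lattice polynomial satisfies $w(\varnothing)=a$ on a nondegenerate lattice ($a\neq b$). This is precisely the observation ``$\Pr(\vec\chi(b)=\vec\kappa)=0$ except when $\vec\kappa=\vec 0$'' recorded just before the statement, sharpened by the boundary value $\pi_{\omega}(\vec 0)=0$. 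Hence the factor $\Ind(b>y)$ may be deleted for all $y\in L$, which yields~(\ref{eq:corLP}).

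Finally, the independence formula requires no new idea: it is obtained by inserting into~(\ref{eq:corLP}) the product expression for the cell probabilities established in Corollary~\ref{cor:IndVar}, namely $\Pr(\vec\chi(y)=\vec\kappa)=\prod_{i=1}^n F_i(y)^{1-\kappa_i}[1-F_i(y)]^{\kappa_i}$, valid when the $X_i$ are independent. A consistency check I would keep in mind throughout is that at $y=b$ both sides of~(\ref{eq:corLP}) vanish: the left side because $\Pr(Y>b)=0$ for an $L$-valued $Y$, and the right side because the only cell with positive mass, $\vec\kappa=\vec 0$, carries the coefficient $\pi_{\omega}(\vec 0)=0$.
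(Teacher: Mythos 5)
Your proposal is correct and follows essentially the same route as the paper, whose (implicit) proof consists precisely of substituting the factorization~(\ref{eq:IndPi}) into Theorem~\ref{thm:OmFes} and then invoking the observation that $\Pr(\vec\chi(b)=\vec\kappa)=0$ except when $\vec\kappa=\vec 0$, with the independent case read off from Corollary~\ref{cor:IndVar}. Your treatment of the boundary point $y=b$ is in fact slightly more careful than the paper's, since you make explicit the needed fact that $\pi_{\omega}(\vec 0)=\Ind(a=b)=0$ for a lattice polynomial on a nondegenerate lattice, which the paper leaves unstated.
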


\subsection{Symmetric weighted lattice polynomials}

Considering a system with collective upper or lower bounds on subset lifetimes, an important special case arises where the corresponding
weighted lattice polynomial is a symmetric function, that is, invariant under permutation of its variables.

Prominent instances of symmetric weighted lattice polynomials are given by the so-called \emph{order statistic} functions, whose definition is
recalled next; see for instance Ovchinnikov~\cite{Ovc96}.

\begin{definition}\label{de:OS}
For any $k\in [n]$, the $k$th \emph{order statistic}\/ function is the lattice polynomial $f_k:L^n\to L$ defined as
$$
f_k(\vec x)=x_{(k)}=\bigvee_{\textstyle{S\subseteq [n]\atop |S|= n-k+1}}\bigwedge_{i\in S}x_i.
$$
(For future needs we also formally define $f_0\equiv a$ and $f_{n+1}\equiv b$.)
\end{definition}

\begin{remark}
A system whose underlying weighted lattice polynomial is the order statistic function $f_{n-k+1}$ is called a \emph{$k$-out-of-$n$ system}; see
for instance the book by Rausand and H{\o}yland \cite[\S3.10]{RauHoy04}.
\end{remark}

The following proposition shows that a weighted lattice polynomial $p_w:L^n\to L$ is symmetric if and only if its underlying set function
$w:2^{[n]}\to\{a,b\}$, as defined in (\ref{eq:WlpDisjFw}), is \emph{cardinality-based}, that is, such that
$$
|S|=|S'| \quad\Rightarrow\quad w(S)=w(S').
$$
Equivalently, there exists a nondecreasing function $m:\{0,1,\ldots,n\}\to L$ such that
$$%\begin{equation}\label{eq:CardBaW}
w(S)=m(|S|), \qquad S\subseteq [n].
$$%\end{equation}

The proposition also provides an expression of an arbitrary symmetric weighted lattice polynomial as a ``convolution'' of $L$-valued functions
on $[n]$: $m(s)$ and $l(s)=f_s(\vec x)$.

\begin{proposition}\label{prop:SymWLP}
Consider a weighted lattice polynomial $p_w:L^n\to L$. Then the following assertions are equivalent:
\begin{enumerate}
\item[(i)] The set function $w$ is cardinality-based.

\item[(ii)] There exists a nondecreasing function $m:\{0,1,\ldots,n\}\to L$ such that
$$
p_w(\vec x) = \bigvee_{s=0}^n \Big[m(s)\wedge f_{n-s+1}(\vec x)\Big].
$$

\item[(iii)] $p_w$ is symmetric.
\end{enumerate}
\end{proposition}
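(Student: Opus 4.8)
The plan is to prove the cyclic chain of implications $(i)\Rightarrow(ii)\Rightarrow(iii)\Rightarrow(i)$, which is the cleanest way to establish the three-way equivalence. Each link is a separate, fairly self-contained argument, and I would carry them out in that order.

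For $(i)\Rightarrow(ii)$, I would start from the disjunctive normal form (\ref{eq:WlpDisjF}),
\[
p_w(\vec x)=\bigvee_{S\subseteq[n]}\Big[w(S)\wedge\bigwedge_{i\in S}x_i\Big],
\]
and assume $w(S)=m(|S|)$ for a nondecreasing $m$. The idea is to group the join over all subsets $S$ according to their common cardinality $s=|S|$. Since $m(|S|)$ depends only on $s$, each cardinality-$s$ block factors as $m(s)\wedge\bigvee_{|S|=s}\bigwedge_{i\in S}x_i$ by distributivity of $\wedge$ over $\vee$. Recognizing the inner join over all $s$-element subsets as exactly the order statistic $f_{n-s+1}(\vec x)$ from Definition~\ref{de:OS} (with the index matched so that $|S|=s$ corresponds to $|S|=n-(n-s+1)+1$) gives the desired convolution form. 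The boundary cases $s=0$ and $s=n$ should be checked against the formal definitions $f_{n+1}\equiv b$ and $f_1$, but these are routine.

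The implication $(ii)\Rightarrow(iii)$ is the easiest: every order statistic function $f_{n-s+1}$ is symmetric by construction, being a join over \emph{all} subsets of a fixed size and hence invariant under any permutation of the arguments; a join of symmetric functions capped by constants $m(s)$ remains symmetric, so $p_w$ is symmetric. I would state this in a sentence or two.

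The remaining implication $(iii)\Rightarrow(i)$ is where I expect the only real content, so I would save it for last. Assuming $p_w$ is symmetric, I want to deduce that $w$ is cardinality-based. The natural tool is the canonical formula (\ref{eq:WlpDisjFw}), $w(S)=p_w(\vec e^{\,a,b}_S)$. Given two subsets $S,S'$ with $|S|=|S'|$, there is a permutation $\sigma$ of $[n]$ carrying $S$ onto $S'$, and this same permutation carries the characteristic vector $\vec e^{\,a,b}_S$ onto $\vec e^{\,a,b}_{S'}$. By symmetry of $p_w$, evaluating at these two permuted arguments yields the same value, so $w(S)=w(S')$, establishing $(i)$. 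The subtlety to watch for is that symmetry of $p_w$ as a function must be invoked at the specific argument vectors $\vec e^{\,a,b}_S$, which is legitimate since symmetry holds pointwise on all of $L^n$; once that is noted, the argument closes the cycle. I would also remark that the parenthetical equivalence with the existence of a nondecreasing $m$ is immediate, since a cardinality-based $w$ taking values in $\{a,b\}\subseteq L$ defines $m(s)$ as the common value of $w$ on $s$-element sets, and monotonicity of $w$ transfers to monotonicity of $m$.
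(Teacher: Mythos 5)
Your proposal is correct and follows essentially the same route as the paper: the same cyclic chain $(i)\Rightarrow(ii)\Rightarrow(iii)\Rightarrow(i)$, with $(i)\Rightarrow(ii)$ obtained by grouping the disjunctive normal form (\ref{eq:WlpDisjF}) by cardinality, $(ii)\Rightarrow(iii)$ immediate from symmetry of the order statistic functions, and $(iii)\Rightarrow(i)$ from the canonical formula (\ref{eq:WlpDisjFw}) via a permutation carrying $\vec e^{\,a,b}_S$ to $\vec e^{\,a,b}_{S'}$. You simply spell out in detail the steps the paper labels ``Trivial'' and ``Follows immediately,'' and your elaborations are accurate.
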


\begin{proof}
$(i)\Rightarrow (ii)$ By (\ref{eq:WlpDisjF}), we have
$$
p_w(\vec x) = \bigvee_{S\subseteq [n]}\Big[w(S)\wedge\bigwedge_{i\in S}x_i\Big] = \bigvee_{s=0}^n m(s)\wedge\bigg[\bigvee_{\textstyle{S\subseteq
[n]\atop |S|=s}}\bigwedge_{i\in S}x_i\bigg],
$$
which proves the stated formula.

$(ii)\Rightarrow (iii)$ Trivial.

$(iii)\Rightarrow (i)$ Follows immediately from (\ref{eq:WlpDisjFw}).
\end{proof}

\begin{remark}
It is noteworthy that Definition~\ref{de:OS} and Proposition~\ref{prop:SymWLP} can also be stated in the more general case where $L$ is an
arbitrary bounded distributive lattice.
\end{remark}

For symmetric weighted lattice polynomials, Theorem~\ref{thm:OmFes} takes the following form. For any $y\in L$, set $s(y)=\min\{s,n+1\, :\,
m(s)>y\}$.

\begin{theorem}
Under the assumptions of Theorem~\ref{thm:OmFes} and assuming further that the set function $w$ is cardinality-based, we have
\begin{equation}\label{eq:CardBaFY}
1-F_Y(y) = \sum_{s=s(y)}^n \Pr(|\vec\chi(y)|=s).
\end{equation}
\end{theorem}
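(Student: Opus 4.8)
The plan is to start from the master formula in Theorem~\ref{thm:OmFes}, namely
$$
1-F_Y(y)=\sum_{\vec\kappa\in\{0,1\}^n} \Ind(w(S_{\vec\kappa})>y)\,\Pr(\vec\chi(y)=\vec\kappa),
$$
and exploit the cardinality-based hypothesis to collapse the sum over binary vectors into a sum over the single integer statistic $|\vec\chi(y)|$. Since $w$ is cardinality-based, there is a nondecreasing function $m:\{0,\ldots,n\}\to L$ with $w(S)=m(|S|)$, so that $w(S_{\vec\kappa})=m(|\vec\kappa|)$ depends on $\vec\kappa$ only through its weight $|\vec\kappa|$. The first step is therefore to rewrite the indicator as $\Ind(w(S_{\vec\kappa})>y)=\Ind(m(|\vec\kappa|)>y)$.

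Next I would group the terms by the value $s=|\vec\kappa|$, writing
$$
1-F_Y(y)=\sum_{s=0}^n \Ind(m(s)>y)\,\Pr(|\vec\chi(y)|=s),
$$
using that $\sum_{\vec\kappa:\,|\vec\kappa|=s}\Pr(\vec\chi(y)=\vec\kappa)=\Pr(|\vec\chi(y)|=s)$. The remaining task is to identify the set of indices $s$ for which $\Ind(m(s)>y)=1$. Because $m$ is nondecreasing and $L$ is totally ordered, the condition $m(s)>y$ holds precisely for $s$ above a threshold; this is exactly where the definition $s(y)=\min\{s,n+1:m(s)>y\}$ enters, and I would note that $\Ind(m(s)>y)=1$ if and only if $s\geqslant s(y)$, so the indicator simply restricts the summation range.

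The one point requiring a little care — and the closest thing to an obstacle — is the boundary convention. One must check that the monotonicity of $m$ genuinely makes $\{s:m(s)>y\}$ an up-set of $\{0,\ldots,n\}$, and that the formal value $n+1$ in the definition of $s(y)$ correctly encodes the case where no $s\leqslant n$ satisfies $m(s)>y$ (so that the sum is empty and $1-F_Y(y)=0$). With this verified, substituting the threshold characterization into the grouped sum and dropping the now-redundant indicator yields
$$
1-F_Y(y) = \sum_{s=s(y)}^n \Pr(|\vec\chi(y)|=s),
$$
which is exactly (\ref{eq:CardBaFY}). The whole argument is short: it is essentially Theorem~\ref{thm:OmFes} plus the observation that a nondecreasing threshold function turns an indicator-weighted sum into a truncated sum.
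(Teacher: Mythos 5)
Your proof is correct and follows essentially the same route as the paper: group the terms of Theorem~\ref{thm:OmFes} by the weight $s=|\vec\kappa|$ to get $1-F_Y(y)=\sum_{s=0}^n \Ind(m(s)>y)\Pr(|\vec\chi(y)|=s)$, then use the monotonicity of $m$ to replace the indicator by the truncation $s\geqslant s(y)$. If anything, you are slightly more explicit than the paper about why $\{s : m(s)>y\}$ is an up-set and about the boundary convention $s(y)=n+1$, which the paper only addresses via the remark that the sum vanishes when $m(n)\leqslant y$.
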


\begin{proof}
When $w$ is cardinality-based, the sum in (\ref{eq:OmFeS}) can be rewritten as
\begin{eqnarray}
1-F_Y(y) &=& \sum_{s=0}^n \Ind(m(s)>y)\sum_{\textstyle{\vec\kappa\in\{0,1\}^n \atop |\vec\kappa |=s}}\Pr(\vec\chi(y)=\vec\kappa)\nonumber\\
&=& \sum_{s=0}^n \Ind(m(s)>y)\Pr(|\vec\chi(y)|=s).\label{eq:CDFcb}
\end{eqnarray}
When $m(n)\leqslant y$, we have $s(y)=n+1$, and (\ref{eq:CDFcb}) reduces to 0.
\end{proof}

Obviously, (\ref{eq:CardBaFY}) can also be presented as
\begin{equation}\label{eq:CardBaFY2}
F_Y(y) = \sum_{s=0}^{s(y)-1}\Pr(|\vec\chi(y)|=s).
\end{equation}

Both (\ref{eq:CardBaFY}) and (\ref{eq:CardBaFY2}) require the knowledge of $\Pr(|\vec\chi(y)|=s)$, which can be calculated through the following
formula:

\begin{proposition}
For any $s\in\{0,1,\ldots,n\}$, we have
$$%\begin{equation}\label{eq:PrCardChi}
\Pr(|\vec\chi(y)|=s) = \frac{1}{s!}\,\frac{\mathrm{d}^s}{\mathrm{d}t^s}\, G(t\vec 1,y)\Big|_{t=0}.
$$%\end{equation}
\end{proposition}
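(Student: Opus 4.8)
The plan is to recognize that the ``diagonal'' substitution $\vec z = t\vec 1$ collapses the multilinear joint p.g.f.\ $G(\vec z,y)$ into the ordinary univariate probability generating function of the integer-valued random variable $|\vec\chi(y)|$, after which the claim reduces to the textbook Taylor-coefficient extraction formula for a polynomial.

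First I would start from the explicit multilinear expansion (\ref{eq:GMultPol}) and set $z_i = t$ for every $i\in [n]$. Since the monomial $\vec z^{\,\vec\kappa} = \prod_{i=1}^n t^{\kappa_i}$ equals $t^{|\vec\kappa|}$, this yields
$$
G(t\vec 1,y) = \sum_{\vec\kappa\in\{0,1\}^n}\Pr(\vec\chi(y)=\vec\kappa)\,t^{|\vec\kappa|}.
$$

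Next I would group the summands according to the common value of $|\vec\kappa|$, exactly as in the passage from the first to the second line of (\ref{eq:CDFcb}). Collecting all $\vec\kappa$ with $|\vec\kappa|=s$ gives
$$
G(t\vec 1,y) = \sum_{s=0}^n\bigg[\sum_{\textstyle{\vec\kappa\in\{0,1\}^n\atop |\vec\kappa|=s}}\Pr(\vec\chi(y)=\vec\kappa)\bigg]\,t^s = \sum_{s=0}^n\Pr(|\vec\chi(y)|=s)\,t^s.
$$
Thus $G(t\vec 1,y)$ is a polynomial in $t$ of degree at most $n$ whose coefficient of $t^s$ is precisely $\Pr(|\vec\chi(y)|=s)$; in other words, it is the ordinary p.g.f.\ of $|\vec\chi(y)|$.

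Finally, for any polynomial $\sum_s c_s t^s$ one recovers the coefficient as $c_s = \frac{1}{s!}\,\frac{\mathrm{d}^s}{\mathrm{d}t^s}(\cdot)\big|_{t=0}$, since $s$-fold differentiation annihilates every term of degree below $s$ while the surviving higher-degree terms vanish upon setting $t=0$, leaving exactly $s!\,c_s$. Applying this to the polynomial above gives the stated formula. I do not anticipate any genuine obstacle: the only point requiring care is the verification that the diagonal substitution turns $\vec z^{\,\vec\kappa}$ into $t^{|\vec\kappa|}$, so that $G(t\vec 1,y)$ is indeed the ordinary p.g.f.\ of $|\vec\chi(y)|$; once that identification is made, the result is the standard fact that differentiating a p.g.f.\ at the origin recovers its point masses.
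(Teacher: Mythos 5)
Your proof is correct and follows exactly the paper's own argument: substitute $\vec z=t\vec 1$ into (\ref{eq:GMultPol}), group terms by $|\vec\kappa|=s$ to identify $G(t\vec 1,y)$ as the univariate p.g.f.\ of $|\vec\chi(y)|$, and read off the coefficients. The paper leaves the final Taylor-coefficient extraction implicit (``from which the result follows''), which you spell out, but there is no substantive difference.
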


\begin{proof}
From (\ref{eq:GMultPol}) it follows that
\begin{eqnarray*}
G(t\vec 1,y) &=& \sum_{\vec\kappa\in\{0,1\}^n}\Pr(\vec\chi(y)=\vec\kappa)\, t^{|\vec\kappa |} ~ = ~
\sum_{s=0}^n\bigg[\sum_{\textstyle{\vec\kappa\in\{0,1\}^n \atop |\vec\kappa | = s}}\Pr(\vec\chi(y)=\vec\kappa)\bigg]t^s\\
&=& \sum_{s=0}^n \Pr(|\vec\chi(y)|=s)\, t^s,
\end{eqnarray*}
from which the result follows.
\end{proof}

In terms of the joint c.d.f.\ of $X_1,\ldots,X_n$, we also have the following formula:

\begin{proposition}
For any $s\in\{0,1,\ldots,n\}$, we have
\begin{equation}\label{eq:PrsFe}
\Pr(|\vec\chi(y)|=s) = \sum_{\textstyle{T\subseteq [n]\atop |T|\leqslant s}} (-1)^{s-|T|}{n-|T|\choose s-|T|}\, F(\vec e^{\, y,b}_T).
\end{equation}
\end{proposition}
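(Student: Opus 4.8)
The plan is to combine the two facts already at hand—the expansion of $\Pr(|\vec\chi(y)|=s)$ as a sum over subsets of fixed cardinality, and the M\"obius-inversion formula (\ref{eq:PrFeT}) for $\Pr(\vec\chi(y)=\vec\varepsilon_S)$—and then reorganize the resulting double sum by collecting terms according to the inner subset $T$. First I would recall, from (\ref{eq:GMultPol}) (equivalently from the preceding proposition), that summing $\Pr(\vec\chi(y)=\vec\kappa)$ over all binary vectors of weight $s$ yields $\Pr(|\vec\chi(y)|=s)$, i.e.
$$
\Pr(|\vec\chi(y)|=s)=\sum_{\textstyle{S\subseteq[n]\atop |S|=s}}\Pr(\vec\chi(y)=\vec\varepsilon_S).
$$

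Next I would substitute the expression (\ref{eq:PrFeT}), namely $\Pr(\vec\chi(y)=\vec\varepsilon_S)=\sum_{T\subseteq S}(-1)^{|S|-|T|}F(\vec e^{\,y,b}_T)$, into this identity. Since every $S$ appearing in the outer sum satisfies $|S|=s$, each sign simplifies to $(-1)^{s-|T|}$, and one obtains the double sum
$$
\Pr(|\vec\chi(y)|=s)=\sum_{\textstyle{S\subseteq[n]\atop |S|=s}}\ \sum_{T\subseteq S}(-1)^{s-|T|}\,F(\vec e^{\,y,b}_T).
$$

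The core step is to exchange the order of summation. I would fix a subset $T$ and count the number of sets $S$ of cardinality $s$ that contain it: such an $S$ is obtained by adjoining $s-|T|$ of the $n-|T|$ elements lying outside $T$, so there are exactly $\binom{n-|T|}{s-|T|}$ of them. Only sets with $|T|\leqslant s$ can arise, and for each such $T$ the weight $(-1)^{s-|T|}F(\vec e^{\,y,b}_T)$ does not depend on $S$; factoring it out and replacing the inner count by the binomial coefficient yields precisely the claimed formula (\ref{eq:PrsFe}).

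I do not expect a genuine obstacle here, as the argument is purely combinatorial once (\ref{eq:PrFeT}) is invoked. The only point demanding care is the interchange of sums: one must check that the joint constraint $T\subseteq S$ with $|S|=s$ translates correctly into the multiplicity $\binom{n-|T|}{s-|T|}$ together with the admissibility range $|T|\leqslant s$, so that no spurious terms are introduced and none are lost.
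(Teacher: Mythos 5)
Your proposal is correct and follows essentially the same route as the paper's own proof: both start from $\Pr(|\vec\chi(y)|=s)=\sum_{|S|=s}\Pr(\vec\chi(y)=\vec\varepsilon_S)$, substitute the M\"obius-inversion formula (\ref{eq:PrFeT}), and interchange the order of summation so that the inner sum over $S\supseteq T$ with $|S|=s$ collapses to $(-1)^{s-|T|}{n-|T|\choose s-|T|}$. The counting argument you give for the binomial coefficient is exactly the reduction the paper performs.
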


\begin{proof}
From (\ref{eq:PrFeT}) it follows that
\begin{eqnarray*}
\Pr(|\vec\chi(y)|=s) &=& \sum_{\textstyle{S\subseteq [n]\atop |S|=s}} \Pr(\vec\chi(y)=\vec\varepsilon_S)
~=~ \sum_{\textstyle{S\subseteq [n]\atop |S|=s}} \sum_{T\subseteq S} (-1)^{|S|-|T|}\, F(\vec e^{\, y,b}_T)\\
&=& \sum_{\textstyle{T\subseteq [n]\atop |T|\leqslant s}} F(\vec e^{\, y,b}_T)\, \sum_{\textstyle{S\supseteq T\atop |S|=s}} (-1)^{|S|-|T|},
\end{eqnarray*}
where the inner sum reduces to $(-1)^{s-|T|}{n-|T|\choose s-|T|}$.
\end{proof}

We now derive an expression of $\Pr(|\vec\chi(y)|=s)$ is terms of the c.d.f.'s of the order statistic functions.

Consider the random variable $X_{(k)}=f_k(X_1,\ldots,X_n)$ and its c.d.f.\ $F_{(k)}(y)=F_{X_{(k)}}(y)$. It is easy to see that, for the $k$th
order statistic function $f_k$, we have $s(y)=n-k+1$ if $y<b$ and $s(y)=n+1$ if $y=b$. Therefore, from (\ref{eq:CardBaFY2}) it follows that
\begin{equation}\label{eq:FkPr}
F_{(k)}(y) = \sum_{s=0}^{n-k}\Pr(|\vec\chi(y)|=s) = \Pr(|\vec\chi(y)|\leqslant n-k),
\end{equation}
from which we immediately derive the following formula:~\footnote{In (\ref{eq:FkPr}) we formally define $F_{(0)}(y)\equiv 1$ and
$F_{(n+1)}(y)\equiv 0$.}

\begin{proposition}\label{prop:PrDiff}
For any $s\in\{0,1,\ldots,n\}$, we have
\begin{equation}\label{eq:PrDiff}
\Pr(|\vec\chi(y)|=s)=F_{(n-s)}(y)-F_{(n-s+1)}(y).
\end{equation}
\end{proposition}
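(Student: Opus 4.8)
The plan is to derive \eqref{eq:PrDiff} directly from the already-established identity \eqref{eq:FkPr}, which expresses each order-statistic c.d.f.\ as a cumulative probability of the indicator count. The key observation is that the right-hand side of \eqref{eq:PrDiff} is a telescoping difference of such cumulative quantities, so the individual-mass probability $\Pr(|\vec\chi(y)|=s)$ should pop out after subtraction.

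First I would write, using \eqref{eq:FkPr} with $k$ replaced by $n-s$ and by $n-s+1$ respectively,
\begin{eqnarray*}
F_{(n-s)}(y) &=& \Pr(|\vec\chi(y)|\leqslant s),\\
F_{(n-s+1)}(y) &=& \Pr(|\vec\chi(y)|\leqslant s-1).
\end{eqnarray*}
Subtracting the second line from the first leaves exactly $\Pr(|\vec\chi(y)|=s)$, since the events $\{|\vec\chi(y)|\leqslant s\}$ and $\{|\vec\chi(y)|\leqslant s-1\}$ differ precisely by the event $\{|\vec\chi(y)|=s\}$. This establishes \eqref{eq:PrDiff}.

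The only subtlety, and the point I would be careful to check, lies at the boundary values of $s$, where the index $n-s+1$ or $n-s$ falls outside the range $\{1,\ldots,n\}$ for which \eqref{eq:FkPr} was stated. For $s=0$ the formula calls for $F_{(n+1)}(y)$, and for $s=n$ it calls for $F_{(0)}(y)$; these are handled by the footnote conventions $F_{(0)}(y)\equiv 1$ and $F_{(n+1)}(y)\equiv 0$ introduced alongside \eqref{eq:FkPr}. With those conventions in place, the telescoping argument remains valid at the endpoints: the case $s=n$ gives $F_{(0)}(y)-F_{(1)}(y)=1-\Pr(|\vec\chi(y)|\leqslant n-1)=\Pr(|\vec\chi(y)|=n)$, and the case $s=0$ gives $F_{(n)}(y)-F_{(n+1)}(y)=\Pr(|\vec\chi(y)|=0)-0$. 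Verifying this boundary consistency is the main (and essentially the only) obstacle; the interior cases are immediate.
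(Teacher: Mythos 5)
Your proof is correct and matches the paper's own route: the paper derives \eqref{eq:PrDiff} ``immediately'' from \eqref{eq:FkPr} by exactly this telescoping subtraction, relying on the same footnote conventions $F_{(0)}(y)\equiv 1$ and $F_{(n+1)}(y)\equiv 0$ at the endpoints. Your explicit verification of the boundary cases $s=0$ and $s=n$ is a careful spelling-out of what the paper leaves implicit.
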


Incidentally, by combining (\ref{eq:PrsFe}) and (\ref{eq:FkPr}), we get an explicit form of $F_{(k)}(y)$ in terms of $F(\vec x)$; see also David
and Nagaraja~\cite[\S 5.3]{DavNag03}.

\begin{corollary}
We have
$$
F_{(k)}(y) = \sum_{\textstyle{S\subseteq [n]\atop |S|\geqslant k}} (-1)^{|S|-k}{|S|-1\choose k-1}\, F(\vec e^{\, y,b}_{[n]\setminus S}).
$$
\end{corollary}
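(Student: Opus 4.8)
The plan is to chain together the two formulas the surrounding text already points to: the expansion (\ref{eq:FkPr}) of $F_{(k)}(y)$ as the partial sum $\sum_{s=0}^{n-k}\Pr(|\vec\chi(y)|=s)$, and the explicit expression (\ref{eq:PrsFe}) of each weight $\Pr(|\vec\chi(y)|=s)$ in terms of the joint c.d.f.\ values $F(\vec e^{\, y,b}_T)$. Substituting the second into the first produces a double sum over $s\in\{0,\ldots,n-k\}$ and $T\subseteq[n]$ with $|T|\leqslant s$, and the entire task reduces to collecting, for each fixed set $T$, the total coefficient multiplying $F(\vec e^{\, y,b}_T)$.

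First I would interchange the order of summation so that $T$ becomes the outer index. For a fixed $T$ with $|T|=t$, the constraint pair $t\leqslant s\leqslant n-k$ governs which terms survive, so the accumulated coefficient of $F(\vec e^{\, y,b}_T)$ is the inner sum $\sum_{s=t}^{n-k}(-1)^{s-t}\binom{n-t}{s-t}$; in particular every $T$ with $t>n-k$ contributes an empty sum and drops out. After the shift $j=s-t$, this coefficient is the truncated alternating binomial sum $\sum_{j=0}^{n-k-t}(-1)^j\binom{n-t}{j}$.

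The single nonroutine step, and the one I expect to be the main obstacle, is evaluating this truncated alternating sum in closed form. I would invoke the identity $\sum_{j=0}^{M}(-1)^j\binom{N}{j}=(-1)^M\binom{N-1}{M}$, which follows in one line from Pascal's rule: splitting $\binom{N}{j}=\binom{N-1}{j}+\binom{N-1}{j-1}$ makes the two partial sums telescope against each other, leaving only the top term $(-1)^M\binom{N-1}{M}$. Applying this with $N=n-t$ and $M=n-k-t$ yields the coefficient $(-1)^{n-k-t}\binom{n-t-1}{n-k-t}$, and the symmetry $\binom{n-t-1}{n-k-t}=\binom{n-t-1}{k-1}$ rewrites it as $(-1)^{n-k-t}\binom{n-t-1}{k-1}$.

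Finally I would pass to the complementary index $S=[n]\setminus T$, so that $|S|=n-t$ and $F(\vec e^{\, y,b}_T)=F(\vec e^{\, y,b}_{[n]\setminus S})$. Under this reindexing the surviving range $t\leqslant n-k$ becomes $|S|\geqslant k$, the sign $(-1)^{n-k-t}$ becomes $(-1)^{|S|-k}$, and the binomial $\binom{n-t-1}{k-1}$ becomes $\binom{|S|-1}{k-1}$, reproducing the stated formula verbatim. Since the whole argument is just resummation plus one identity, the only real risk is a bookkeeping slip in the ranges or signs during the interchange and the complementation; I would guard against it by checking the extreme cases $k=1$ and $k=n$ against the conventions $F_{(0)}\equiv1$, $F_{(n+1)}\equiv0$, confirming that the boundary coefficients come out correctly.
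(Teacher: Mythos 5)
Your proof is correct and follows essentially the same route as the paper's: substitute (\ref{eq:PrsFe}) into (\ref{eq:FkPr}), interchange the order of summation, evaluate the inner alternating sum via the classical identity $\sum_{j=0}^{M}(-1)^j{N\choose j}=(-1)^M{N-1\choose M}$, and finish with the complementation $S=[n]\setminus T$. The paper simply cites that identity as classical, whereas you also supply its one-line telescoping proof; otherwise the two arguments coincide step for step.
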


\begin{proof}
Combining (\ref{eq:PrsFe}) and (\ref{eq:FkPr}), we obtain
\begin{eqnarray*}
F_{(k)}(y) &=& \sum_{s=0}^{n-k}\sum_{t=0}^s\sum_{\textstyle{T\subseteq [n]\atop |T|=t}} (-1)^{s-t}{n-t\choose s-t}\, F(\vec e^{\, y,b}_T)\\
&=& \sum_{t=0}^{n-k}\sum_{\textstyle{T\subseteq [n]\atop |T|=t}} F(\vec e^{\, y,b}_T)\, \sum_{s=t}^{n-k}(-1)^{s-t}{n-t\choose s-t},
\end{eqnarray*}
where the inner sum reduces to the following classical binomial identity
$$
\sum_{s=0}^{n-k-t} (-1)^s {n-t\choose s} = (-1)^{n-k-t}{n-t-1\choose k-1}.
$$
Therefore,
$$
F_{(k)}(y) = \sum_{\textstyle{T\subseteq [n]\atop |T|\leqslant n-k}} (-1)^{n-k-|T|}{n-|T|-1\choose k-1}\, F(\vec e^{\, y,b}_T),
$$
and the result follows from the substitution $S=[n]\setminus T$.
\end{proof}

\subsection{Cardinality symmetry of the indicator variables}

A case is now considered where the joint c.d.f.\ of the arguments $X_1,\ldots,X_n$ is such that, for any $x\in L$, $\Pr(\vec\chi(x)=\vec\kappa)$
depends only on $|\vec\kappa|$, that is
$$
|\vec\kappa|=|\vec\kappa'| \quad\Rightarrow\quad \Pr(\vec\chi(x)=\vec\kappa)=\Pr(\vec\chi(x)=\vec\kappa').
$$
Equivalently, for any $x\in L$, there exists a function $g_x:\{0,1,\ldots,n\}\to [0,1]$ such that
\begin{equation}\label{eq:CardBaCDF}
\Pr(\vec\chi(x)=\vec\kappa)=g_x(|\vec\kappa|), \qquad \vec\kappa\in\{0,1\}^n.
\end{equation}
In terms of the analogy with system reliability, assumption (\ref{eq:CardBaCDF}) means that the probability that a group of units survives
beyond $x$ (that is, the reliability of this group at time $x$) depends only on the number of units in the group, which is why we call
assumption (\ref{eq:CardBaCDF}) ``cardinality symmetry''. In view of (\ref{eq:PrFeT}), this assumption is satisfied, for instance, when
$X_1,\ldots,X_n$ are exchangeable, that is, when their joint c.d.f.\ is invariant under any permutation of indices. However,
(\ref{eq:CardBaCDF}) itself suggests only that the synchronous indicator variables be exchangeable.

In particular, for independent and identically distributed (i.i.d.) arguments with a common c.d.f.\ $F(x)$, we have (see
Corollary~\ref{cor:IndVar}):
$$
\Pr(\vec\chi(x)=\vec\kappa) = F(x)^{n-|\vec\kappa|}[1-F(x)]^{|\vec\kappa|}.
$$
More generally, one can easily see that when the arguments $X_1,\ldots,X_n$ are obtained from some i.i.d.\ random variables $Y_1,\ldots,Y_n$ by
a transformation
$$
X_i=f(Y_i,\vec Z),\qquad i\in [n],
$$
where $\vec Z$ is a vector of some (random) parameters, assumption (\ref{eq:CardBaCDF}) still holds true.

\begin{corollary}
Under the assumptions of Theorem~\ref{thm:OmFes} and assuming further that the joint c.d.f.\ possesses property (\ref{eq:CardBaCDF}), we have
\begin{equation}\label{eq:CSCor}
1-F_Y(y)=\sum_{s=0}^n g_y(s) \sum_{\textstyle{S\subseteq [n]\atop |S|=s}} \Ind(w(S)>y).
\end{equation}
In particular, in case of lattice polynomials,
$$
1-F_Y(y)=\sum_{s=0}^n g_y(s) \sum_{\textstyle{\vec\kappa\in\{0,1\}^n\atop |\vec\kappa|=s}} \pi_{\omega}(\vec\kappa).
$$
\end{corollary}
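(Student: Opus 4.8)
The plan is to derive the first identity directly from the second form (\ref{eq:OmFeS2}) of Theorem~\ref{thm:OmFes} by inserting the cardinality-symmetry hypothesis and then regrouping the sum according to cardinality. First I would recall that (\ref{eq:OmFeS2}) reads $1-F_Y(y)=\sum_{S\subseteq [n]}\Ind(w(S)>y)\,\Pr(\vec\chi(y)=\vec\varepsilon_S)$. Since the weight of the characteristic vector $\vec\varepsilon_S$ is exactly $|\vec\varepsilon_S|=|S|$, assumption (\ref{eq:CardBaCDF}) applied with $\vec\kappa=\vec\varepsilon_S$ gives $\Pr(\vec\chi(y)=\vec\varepsilon_S)=g_y(|S|)$. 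Substituting this into (\ref{eq:OmFeS2}) makes the summand depend on $S$ only through the indicator $\Ind(w(S)>y)$ and through the cardinality $|S|$ appearing inside $g_y$.

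The key step is then the regrouping: I would partition the subsets $S\subseteq [n]$ by their common cardinality $s=|S|\in\{0,1,\ldots,n\}$. For each fixed $s$ the factor $g_y(s)$ is constant over all $S$ with $|S|=s$, so it may be pulled outside the inner summation, leaving $\sum_{s=0}^n g_y(s)\sum_{|S|=s}\Ind(w(S)>y)$, which is precisely (\ref{eq:CSCor}).

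For the lattice-polynomial specialization I would not restart from Theorem~\ref{thm:OmFes} but from the ready-made formula (\ref{eq:corLP}) of Corollary~\ref{cor:LP}, which already packages the reduction (\ref{eq:IndPi}) of $\Ind(w(S)>y)$ into the binary polynomial $\pi_\omega$. Writing (\ref{eq:corLP}) in the form $1-F_Y(y)=\sum_{\vec\kappa\in\{0,1\}^n}\pi_\omega(\vec\kappa)\,\Pr(\vec\chi(y)=\vec\kappa)$ and applying (\ref{eq:CardBaCDF}) directly as $\Pr(\vec\chi(y)=\vec\kappa)=g_y(|\vec\kappa|)$, the same cardinality regrouping (now indexed by $s=|\vec\kappa|$) yields the second displayed formula. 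The proof is essentially bookkeeping; the only point deserving care---and what I would flag as the sole potential pitfall---is the consistent matching of notation, namely that the weight feeding $g_y$ is $|\vec\varepsilon_S|=|S|$ in the first formula and $|\vec\kappa|$ in the second, so that the outer summation index $s$ has the correct range and the factorization of $g_y(s)$ out of the cardinality-$s$ shell is legitimate.
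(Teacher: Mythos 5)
Your proposal is correct and matches the argument the paper intends (the corollary is stated without an explicit proof precisely because it follows this way): substitute the cardinality-symmetry hypothesis (\ref{eq:CardBaCDF}) into (\ref{eq:OmFeS2}), regroup the sum over subsets by cardinality, and for the lattice-polynomial case apply the same regrouping to (\ref{eq:corLP}), which already absorbs the $\Ind(b>y)$ subtlety of (\ref{eq:IndPi}). Nothing is missing.
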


By Proposition~\ref{prop:GMobTrans}, we also have
$$
F(\vec e^{\, x,b}_S)=\sum_{t=0}^{|S|}{|S|\choose t}\, g_x(t),
$$
which shows that the set function $S\mapsto F(\vec e^{\, x,b}_S)$ is cardinality-based.

For order statistic functions, we have~\footnote{The total probability condition allows to formally extend (\ref{eq:FkPr2}) to the case $k=0$.}
\begin{equation}\label{eq:FkPr2}
F_{(k)}(y) = \sum_{s=0}^{n-k}{n\choose s}\, g_y(s).
\end{equation}
Indeed, based on (\ref{eq:FkPr}), we have
$$
F_{(k)}(y) = \Pr(|\vec\chi(y)|\leqslant n-k) = \sum_{\textstyle{\vec\kappa\in\{0,1\}^n\atop |\vec\kappa|\leqslant n-k}} g_y(|\vec\kappa|),
$$
which proves (\ref{eq:FkPr2}).

It turns out that cardinality symmetry is necessary and sufficient for the special relation between the c.d.f.'s of $Y$ and order statistic
functions provided in Marichal \cite[Theorem 8]{Mar06} for the case of a lattice polynomial of i.i.d.\ arguments. Beyond its theoretical value,
the relation is a simple linear equation and can be used to facilitate computations.

\begin{theorem}
Under the assumptions of Theorem~\ref{thm:OmFes}, the following relations hold true for an arbitrary weighted lattice polynomial if and only if
the arguments $X_1,\ldots,X_n$ possess cardinality symmetry:
\begin{eqnarray}
1-F_Y(y) &=& \sum_{s=0}^n\overline{\omega}_s\,[F_{(n-s)}(y)-F_{(n-s+1)}(y)],\label{eq:LT1}\\
\overline{\omega}_s &=& \frac 1{{n\choose s}}\sum_{\textstyle{S\subseteq [n]\atop |S|=s}} \Ind(w(S)>y),\qquad s=0,\ldots,n.\label{eq:LT2}
\end{eqnarray}
\end{theorem}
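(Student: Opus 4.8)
The plan is to establish the two implications separately, leaning on the formulas already derived. Throughout I fix $y\in L$, abbreviate $\omega_y(S)=\Ind(w(S)>y)$, and write $P_s=\Pr(|\vec\chi(y)|=s)$; by Proposition~\ref{prop:PrDiff} we always have $P_s=F_{(n-s)}(y)-F_{(n-s+1)}(y)$, so that $(\ref{eq:LT1})$ is nothing but the assertion $1-F_Y(y)=\sum_{s=0}^n\overline\omega_s\,P_s$ with $\overline\omega_s$ given by $(\ref{eq:LT2})$.

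For the sufficiency direction I would start from the cardinality-symmetric form $(\ref{eq:CSCor})$, that is, $1-F_Y(y)=\sum_{s=0}^n g_y(s)\sum_{|S|=s}\omega_y(S)$. Substituting $(\ref{eq:LT2})$ turns the inner sum into $\binom ns\,\overline\omega_s$, giving $1-F_Y(y)=\sum_{s=0}^n\binom ns\,g_y(s)\,\overline\omega_s$. It then remains only to observe that cardinality symmetry collapses $P_s=\sum_{|\vec\kappa|=s}\Pr(\vec\chi(y)=\vec\kappa)$ to $\binom ns\,g_y(s)$; hence $\binom ns\,g_y(s)=P_s=F_{(n-s)}(y)-F_{(n-s+1)}(y)$ and $1-F_Y(y)=\sum_{s=0}^n\overline\omega_s\,[F_{(n-s)}(y)-F_{(n-s+1)}(y)]$, which is $(\ref{eq:LT1})$. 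This half is essentially bookkeeping.

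The substance lies in the necessity direction. Assuming $(\ref{eq:LT1})$--$(\ref{eq:LT2})$ hold for \emph{every} nondecreasing $w$, I would equate the general expression $(\ref{eq:OmFeS2})$ for $1-F_Y(y)$ with the one just obtained. Re-indexing the right-hand side of $(\ref{eq:LT1})$ as a sum over subsets, both sides become linear functionals of the array $(\omega_y(S))_{S\subseteq[n]}$, and their equality reduces to
\[
\sum_{S\subseteq[n]}\omega_y(S)\,c(S)=0,\qquad c(S)=\Pr(\vec\chi(y)=\vec\varepsilon_S)-\frac{P_{|S|}}{\binom n{|S|}},
\]
an identity that must persist for every admissible pattern $\omega_y$.

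The crucial, and I expect hardest, point is to identify which patterns $\omega_y$ are admissible and to exploit their full range. For any fixed $y<b$, as $w$ ranges over all nondecreasing set functions $2^{[n]}\to L$ the map $S\mapsto\Ind(w(S)>y)$ ranges over \emph{all} nondecreasing $\{0,1\}$-valued set functions: given a target nondecreasing $\omega$, put $w(S)=b$ where $\omega(S)=1$ and $w(S)=a$ elsewhere, which is nondecreasing and satisfies $\Ind(w(S)>y)=\omega(S)$ since $a\leqslant y<b$. These are precisely the indicators of up-sets (filters) of the Boolean lattice $2^{[n]}$. Fixing $T\subseteq[n]$ and applying the displayed identity to the two up-sets $\{S:S\supseteq T\}$ and $\{S:S\supsetneq T\}$, the difference of the two equations isolates a single surviving term and yields $c(T)=0$. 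Since $T$ is arbitrary, $\Pr(\vec\chi(y)=\vec\varepsilon_S)=P_{|S|}/\binom n{|S|}$ depends only on $|S|$, which is exactly cardinality symmetry $(\ref{eq:CardBaCDF})$; the boundary value $y=b$ is immediate because there $\vec\chi(b)=\vec 0$ almost surely. The obstacle here is conceptual rather than computational: recognising that monotone choices of $w$ generate every filter indicator, and that differencing nested principal filters extracts $c(T)$ (equivalently, one may finish by M\"obius inversion on $2^{[n]}$).
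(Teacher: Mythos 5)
Your proof is correct. The sufficiency half is essentially the paper's: both substitute the identity ${n\choose s}\,g_y(s)=F_{(n-s)}(y)-F_{(n-s+1)}(y)$ (the paper's (\ref{eq:LTfE}), which you obtain from Proposition~\ref{prop:PrDiff} plus the collapse $\Pr(|\vec\chi(y)|=s)={n\choose s}g_y(s)$) into (\ref{eq:CSCor}). Your necessity half, however, takes a genuinely different route. The paper specializes to lattice polynomials, rewrites both sides via (\ref{eq:IndPi}) and (\ref{eq:PrDiff}), then takes meets $p_w\wedge f_k$ with the order statistic functions so that differencing over $k$ localizes the hypothesis to each cardinality level $s$ (their (\ref{eq:33DerEq})), and finally applies the level-$s$ identity to $\pi_\omega=\bigwedge_{i\in K}\kappa_i$ to force all probabilities at that level to coincide. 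You instead make explicit a realizability lemma the paper never states: for fixed $y<b$, every nondecreasing $\{0,1\}$-valued set function $\omega$ arises as $\Ind(w(\cdot)>y)$ for an $\{a,b\}$-valued nondecreasing $w$; this reduces the hypothesis to the single linear identity $\sum_S\omega(S)\,c(S)=0$ over all up-set indicators $\omega$, and differencing the two nested principal filters $\{S\supseteq T\}$ and $\{S\supsetneq T\}$ (or M\"obius inversion over principal filters alone) isolates $c(T)=0$ directly. Your argument is more economical -- one differencing step per subset, no intermediate level-by-level localization -- and it also handles the boundary case $y=b$ explicitly (where cardinality symmetry is automatic since $\vec\chi(b)=\vec 0$ a.s.), a point the paper glosses over since (\ref{eq:tzu1}) tacitly assumes $\Ind(b>y)=1$. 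What the paper's two-stage route buys is the observation that the hypothesis is only needed for lattice polynomials proper; but since your filters for $\varnothing\neq T\neq[n]$ are themselves lattice-polynomial patterns and the cases $T=\varnothing,[n]$ are vacuous ($c(\varnothing)=c([n])=0$ trivially), your argument in effect yields the same sharpening.
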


\begin{proof}
$(\Leftarrow)$ Assuming (\ref{eq:CardBaCDF}), on the strength of (\ref{eq:FkPr2}),
\begin{equation}\label{eq:LTfE}
{n\choose s}\, g_y(s) = F_{(n-s)}(y)-F_{(n-s+1)}(y),\qquad s=0,\ldots,n.
\end{equation}
Now, using (\ref{eq:LTfE}) in (\ref{eq:CSCor}), we obtain (\ref{eq:LT1})--(\ref{eq:LT2}).

$(\Rightarrow)$ Assume that (\ref{eq:LT1}) and (\ref{eq:LT2}) are true for any weighted lattice polynomial and prove that (\ref{eq:CardBaCDF})
must hold true.

Consider the particular case of a lattice polynomial $p_w$. Substituting (\ref{eq:IndPi}) and (\ref{eq:PrDiff}) in
(\ref{eq:LT1})--(\ref{eq:LT2}) we obtain
\begin{equation}\label{eq:tzu1}
1-F_Y(y)=\sum_{s=0}^n\frac 1{{n\choose s}}\sum_{\textstyle{\vec\kappa\in\{0,1\}^n\atop
|\vec\kappa|=s}}\pi_\omega(\vec\kappa)\sum_{\textstyle{\vec\kappa\in\{0,1\}^n\atop |\vec\kappa|=s}}\Pr(\vec\chi(y)=\vec\kappa).
\end{equation}
On the other hand, by (\ref{eq:corLP}) we have
\begin{equation}\label{eq:tzu2}
1-F_Y(y)=\sum_{s=0}^n\sum_{\textstyle{\vec\kappa\in\{0,1\}^n\atop |\vec\kappa|=s}} \pi_\omega(\vec\kappa)\,\Pr(\vec\chi(y)=\vec\kappa)
\end{equation}
Therefore, the right-hand sides of (\ref{eq:tzu1}) and (\ref{eq:tzu2}) coincide for any lattice polynomial $p_w$. In particular, they coincide
for the lattice polynomial $p_w\wedge f_k$. Since
$$
(\pi_\omega\wedge f_k)(\vec\kappa)=1 \quad\Leftrightarrow\quad \pi_\omega(\vec\kappa)=1 \quad\mbox{and}\quad |\vec\kappa|\geqslant n-k+1,
$$
it follows that
\begin{eqnarray*}
\lefteqn{\sum_{s=n-k+1}^n\frac 1{{n\choose s}}\sum_{\textstyle{\vec\kappa\in\{0,1\}^n\atop
|\vec\kappa|=s}}\pi_\omega(\vec\kappa)\sum_{\textstyle{\vec\kappa\in\{0,1\}^n\atop |\vec\kappa|=s}}\Pr(\vec\chi(y)=\vec\kappa)}\\
& = & \sum_{s=n-k+1}^n\sum_{\textstyle{\vec\kappa\in\{0,1\}^n\atop |\vec\kappa|=s}} \pi_\omega(\vec\kappa)\,\Pr(\vec\chi(y)=\vec\kappa),\qquad
k=0,\ldots,n.
\end{eqnarray*}
It implies that for any lattice polynomial $p_w$ it must hold true that
\begin{eqnarray}
\lefteqn{\frac 1{{n\choose s}}\sum_{\textstyle{\vec\kappa\in\{0,1\}^n\atop
|\vec\kappa|=s}}\pi_\omega(\vec\kappa)\sum_{\textstyle{\vec\kappa\in\{0,1\}^n\atop |\vec\kappa|=s}}\Pr(\vec\chi(y)=\vec\kappa)}\nonumber\\
& = & \sum_{\textstyle{\vec\kappa\in\{0,1\}^n\atop |\vec\kappa|=s}} \pi_\omega(\vec\kappa)\,\Pr(\vec\chi(y)=\vec\kappa),\qquad
s=0,\ldots,n.\label{eq:33DerEq}
\end{eqnarray}
Fix $s\in\{0,\ldots,n\}$ and $K\subseteq [n]$ such that $|K|=s$. Applying (\ref{eq:33DerEq}) to the lattice polynomial
$\pi_\omega(\vec\kappa)=\wedge_{i\in K}\kappa_i$, it yields that
$$
\frac 1{{n\choose s}}\sum_{\textstyle{\vec\kappa\in\{0,1\}^n\atop |\vec\kappa|=s}}\Pr(\vec\chi(y)=\vec\kappa) = \Pr(\vec\chi(y)=\vec\kappa')
$$
for any $\vec\kappa'\in\{0,1\}^n$ such that $|\vec\kappa'|=s$, which completes the proof.
\end{proof}

%\section*{Acknowledgments}

%\bibliographystyle{abbrv}   % styles: plain, unsrt, alpha, abbrv, ieeetr, acm, siam, apalike, amsplain,...
%\bibliography{ReferencesMarichal}

\end{document}